\newtheorem{thm}{Theorem}[section]
\def\lc{\left\{}
\def\rc{\right\}}
\def\lr{\left(}
\def\rr{\right)}
\title{Universal and Overlap Cycles for Posets, Words, and Juggling Patterns} 
\author{Adam King, University of Louisville\and Amanda Laubmeier, University of Arizona\and Kai Orans, Pomona College\and Anant Godbole, East Tennessee State University}
\begin{document}
\maketitle

\begin{abstract} %JMM abstract should go here?
We discuss results dealing with universal cycles (u-cycles) and $s$-overlap cycles, and contribute to the body of those results by proving existence of universal cycles of naturally labeled posets (NL posets), $s$-overlap cycles of words of weight $k$, and juggling patterns. The result on posets is, to the best of our knowledge, the first demonstration of the existence of a u-cycle whose length is unknown.
\end{abstract}

%we should standardize capitalization for u/o cycles, but I don't actually know which one you want to use.
%I commented out duplicate explanations

\section{Introduction}~

A universal cycle, or u-cycle, is an encoding of a set of combinatorial objects as a cycle of the basic elements constituting those objects.  For instance, a u-cycle of a set of graphs will be constructed from vertices and edges, while a u-cycle of a set of labeled posets will be constructed from labeled vertices and directed edges.  Most often the objects we attempt to build a u-cycle of are strings of integers, and in this case the u-cycle is a cycle of integers.  Each object is represented by $k$ adjacent elements in the cycle -- a $k$-window, for some $k\ge2$.  These $k$-windows overlap each other; in fact, each such window shares $k-1$ elements with the window before it and $k-1$ elements with the window after it.  Thus a u-cycle of a set of $n$ objects will have $n$ overlapping $k$-windows.  The length of a u-cycle is the number of objects it encodes.

%Universal cycles ($u$-cycles) are sequences that encode objects. Each object is represented by a window of length $k$ on a string of length $n$. All flavors of the object are uniquely represented by one such $k$-window, shifting one character at a time.

An example of a u-cycle is the cyclic string $11010001$, which cycles through all binary words of length 3. Here $k=3$, $n=8$, and the first object represented is the word $110$. Shifting one character, the window represents the word $101$. This process continues, cycling back to the beginning of the string, until all 8 words appear in the sequence, once and only once each.

An $s$-overlap cycle, $s$-ocycle, or simply ocycle, is a generalization of a u-cycle.  Each object is represented by a $k$-window, but the overlap between the windows is of magnitude $s\le k-2$ instead of $k-1$.  Because of this, an $s$-ocycle of a set of $n$ objects will have length $(k-s)\cdot n$.  Given that u-cycles and $s$-ocycles are primarily a way to condense information, we most often construct $(k-2)$-ocycles when u-cycles don't exist.  However, if $\gcd(n,s) \neq 1$ then an $s$-ocycle might not be possible \cite{horan}, \cite{ocyclesHoran}.  Thus in our results we prove existence of $s$-ocycles for all (or some) $s$ such that $\gcd(n,s) = 1$.  The optimal $s$-ocycle has the largest such $s$. Ocycles were first introduced in \cite{anant} and systematically studied in \cite{hh2} and \cite{ocyclesHoran}.  See also \cite{horan} where Horan studies necessary and sufficient conditions for the values of $s$ that admit ocycles.% 'expanded upon' replaced 'systematically exploited' -- I'm not 100% okay with this but I can't figure out why...I am OK with it sez Anant

%Universal cycles can be generalized to $s$-overlap cycles ($o$-cycles) by increasing the amount shifted between objects. This allows for more flexibility in cycle construction, and some objects which cannot be $u$-cycled can be $o$-cycled.

%Each combinatorial object represented in a $u$-cycle shares $k-1$ characters with the next representation in the cycle.  Each combinatorial object represented in an $s$-overlap cycle shares $s$ characters with the next representation in the cycle.  So a $u$-cycle is a $k-1$-overlap cycle. 

In what follows we will often use ``u-cycle" and ``ocycle" as verbs, much like ``google", and make statements such as ``Some objects which cannot be u-cycled can be ocycled." For example, permutations on 3 letters cannot be represented by a u-cycle. However, the ocycle \begin{center} $123213213123$ \end{center} encodes all permutations on 3 letters. This string is a $1$-overlap cycle with length-3 windows. The first permutation is $123$ and a two-character shift gives the second permutation $321$. This process continues until all 6 permutations appear in the string.  See \cite{ocyclesHoran} for general results on ocycles for permutations.

%Kai's table should go here. expect ungodly white space to follow?

%Anant said to use Antonio here, but I think the graph paper fits, too.

Papers of primary relevance to us are those by Chung, Diaconis and Graham \cite{granddaddyCycles}, Chung and Graham \cite{jugglingChung}, Blanca and Godbole \cite{cyclesAntonio}, Campbell, Godbole and Kay \cite{cyclesAndre}, Horan and Hurlbert \cite{ocyclesHoran}, and Horan \cite{horan}.  The relationships between these are summarized in Figure 1.  A detailed description  follows:  The parent paper to all present and past work on u-cycles is the landmark \cite{granddaddyCycles}.  Some structures examined there were $k$-subsets of $[n]:=\{1,2,\ldots,n\}$ with some permutation of the set $A=\{a_1,\ldots,a_k\}$, written sequentially, representing the set $A$; for example the sequence 1234524135 is a u-cycle of all the 2-subsets of $\{1,2,3,4,5\}$.  (Notice that we have abused the notation somewhat here; the ``$n$" of the general discussion above equals, in this context, ${n\choose k}$.)  We call this the $k$-coding and important progress in this regard was made in the paper of Hurlbert \cite{hurlbert}.  Changing the coding so that a $k$-set is represented by its $n$-long characteristic vector, it was proved in \cite{cyclesAntonio} that a u-cycle could be created of all subsets of sizes in the range $[s,t];t>s$; an example with $s=2, t=3, n=4$ is given by 1110011010.  We call this the $n$-coding.  The authors of \cite{cyclesAntonio} also proved that words of weight in a suitably restricted range, and over an alphabet of size $d$, could be u-cycled, thus extending the subsets result to multisets.  This result was made less restrictive in \cite{cyclesAndre}.  Of course there is no way to produce a u-cycle for subsets of fixed size, or words of fixed weight, using either the $n$-coding or the $k$-coding, and the \$100 conjecture of \cite{granddaddyCycles}, namely that $k$-coded u-cycles of $k$-subsets of $[n]$ exist iff $n\ge n_0(k)$ and $n\vert{n\choose k}$ remains tantalizingly open.  A different kind of breakthrough appeared in \cite{hh2}, where the authors proved that the set of permutations of a fixed size $n$ multiset could be $s$-ocycled if the right divisibility condition was satisfied.  Here is a fact that was not explicitly mentioned there:  Given a binary alphabet and the fixed multiset $A=\{0,0,\ldots,0,1,1,\ldots,1\}$ with $k$ ones and $n-k$ zeros, we note that the set of permutations of $A$ is simply the $n$-coding of all $k$-subsets of $[n]$, so that these notoriously difficult objects {\it can} be ocycled in the binary $n$-coding.  In one of the main results of this paper, proved in Section 3, we generalize this result to words of weight $k$ (multisets of fixed size).  Words of weight $k$ are strings of length $l$ on an $n$ letter alphabet where the sum of the letters is $k$.

Another branch in Figure 1 follows the route from \cite{granddaddyCycles}, through \cite{cyclesAntonio} and \cite{cyclesAndre}, to our second result:  In \cite{cyclesAntonio}, the authors had shown the existence of u-cycles for chains, a particular case of the labeled posets u-cycled by the authors of \cite{cyclesAndre}.  In this paper, we show the existence of u-cycles for the set of all {\it Naturally Labeled posets on $n$ elements}.  A Naturally Labeled poset (NL poset) has nodes labeled uniquely with natural numbers.  For any two nodes $a,b$ in an NL poset, with associated labels $l_a$ and $l_b$, if $a > b$ in the poset, then $l_a > l_b$.  This result is significant because the count of these objects is not known in closed form (see OEIS entry A006455), yet we find that we can u-cycle them!

Our final result concerns $s$-ocycles of juggling patterns of length $k$ and $\le b$ balls; these objects were shown in \cite{jugglingChung} to not admit u-cycles.  Some background:  
``Site-swap" notation is a way of encoding a juggling pattern into a sequence. From a juggling perspective, each number $q$ in the juggling sequence designates a throw with height $q$. From a mathematical perspective we have two conditions to fulfill in order to have a juggling sequence. The sum of the terms of the sequence must be an integer multiple of the length of the sequence, and each term added to its position in the sequence taken mod $k$ must be distinct. The number of balls is the arithmetic mean of the sums of the terms of the sequence.  This produces an underlying permutation for each juggling sequence; for example we have that

\begin{table}[H]
\centering
\begin{tabular}{| c | c | c  |}
\hline
 Juggling Sequence & 531537 & 151140 \\
 Position & 012345 & 012345 \\
 Underlying Permutation &  543210 & 103425 \\
 \hline
\end{tabular}
\caption{Two juggling patterns converted into underlying permutations}
\end{table}
We prove an ocycle result for juggling patterns, but this result, obtained in the summer of 2013, was immediately improved and generalized by Horan \cite{horan}, who characterized those values of $s$ for which $s$-ocycles exist for juggling patterns.  Our result, though a special case of hers, is included due to the simplicity of its proof.

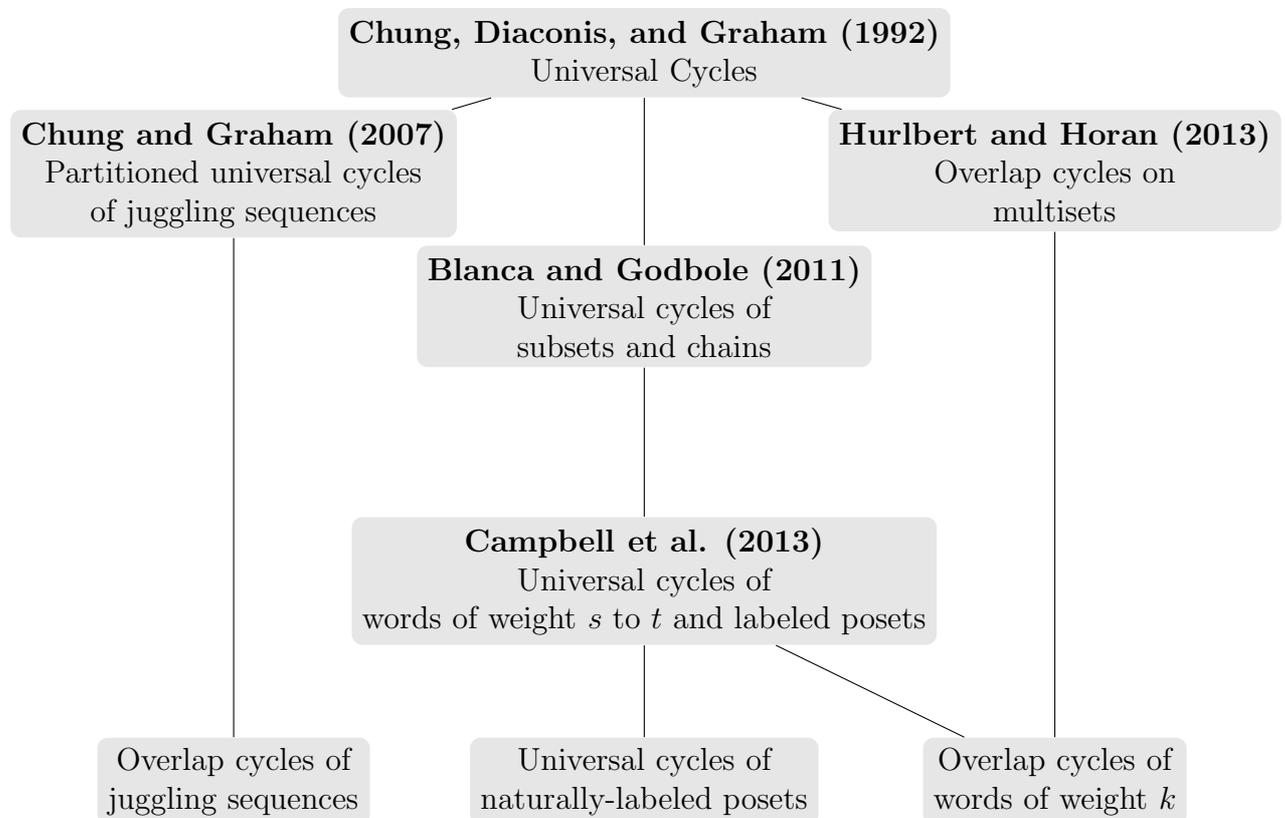
\begin{figure}[H]
 \centering
\begin{tikzpicture}[scale=.9] 
  \draw (0,0) node[fill=black!10, rounded corners](CDG)[align=center, below]{\bf{Chung, Diaconis, and Graham (1992)} \\ Universal Cycles}
  (-6, -1.5)node[fill=black!10, rounded corners](CG)[align=center, below]{\bf{Chung and Graham (2007)} \\ Partitioned universal cycles \\of juggling sequences}
  (6, -1.5) node[fill=black!10,rounded corners](HH)[align=center, below]{\bf{Hurlbert and Horan (2013)}\\Overlap cycles on \\ multisets}
  (0,-3.5)node[fill=black!10, rounded corners](AB)[align=center, below]{\bf{Blanca and Godbole (2011)} \\ Universal cycles of \\ subsets and chains}
  (0,-7.5)node[fill=black!10, rounded corners](BA)[align=center,below]{\bf{Campbell et al. (2013)} \\Universal cycles of \\ words of weight $s$ to $t$ and labeled posets}
  (0, -10.75)node[fill=black!10, rounded corners](posets)[align=center, below]{Universal cycles of \\ naturally-labeled posets}
  (6,-10.75)node[fill=black!10, rounded corners](words)[align=center, below]{Overlap cycles of \\words of weight $k$}
  (-6,-10.75)node[fill=black!10, rounded corners](juggling)[align=center, below]{Overlap cycles of \\ juggling sequences};
  \draw[-, black] (CDG) -- (HH)
  (CDG) -- (CG)
  (CDG) -- (AB)
  (BA) -- (posets)
  (AB) -- (BA)
  (BA) -- (words)
  (HH) -- (words)
  (CG)--(juggling);
\end{tikzpicture}
\caption{Relationships between Relevant Results}
\end{figure} %I can't remember if Anant said to put "our paper" above the topics we did

\section{Universal Cycles of Naturally Labeled Posets}~
A universal cycle of Naturally Labeled posets will include every NL poset of size $k$ as a $k$-window, and we progress from window-to-window in our cycle by changing one node at a time.  Specifically, we move from one poset to another by dropping the smallest node and adding a new largest node.  We then re-number the remaining nodes, preserving order.  

To show that these universal cycles exist, we follow the ``standard" process:  constructing an appropriate arc digraph, with each edge of the digraph representing a distinct NL poset of size $k$, and then showing there exists an Eulerian circuit of the digraph -- which specifies the u-cycle.  An appropriate graph has edges as previously described and vertex set $V$ consisting of NL posets of size $k-1$.  To show there exists an Eulerian circuit of such a graph, we show that the digraph is balanced, i.e., that in-degree $i(v)$ of any vertex equals its out-degree $o(v)$, and that the digraph is connected.  After demonstrating the existence of these universal cycles, we use the same method as in the preliminary {\tt arxiv} version \cite{arxiv} of Brockman, Kay and Snively \cite{graphsBrockman} to encode them as a string of integers.  

\subsection{Balancedness}~

The in-degree of any vertex in our graph is the number of ways we can append a new smallest element $s$ to the vertex NL poset.  The out-degree of any vertex is the number of ways we can append a new largest element $\ell$ to the vertex NL poset.
\begin{thm}
The number of ways to append a new extremal element to a Naturally Labeled poset, $\mu_{k-1}$, is the number of antichains, $A_1,A_2,...,A_j$ in $\mu_{k-1}$: $\vert {\cal A}\vert=\vert anti(\mu_{k-1})\vert=j.$
\end{thm}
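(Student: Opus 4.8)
The plan is to show that ``ways to append a new extremal element'' are in bijection with the order ideals (down-sets) of $\mu_{k-1}$, and then invoke the classical bijection between order ideals and antichains. First I would pin down the meaning of ``append a new largest element'': give the new node $\ell$ a label larger than every label of $\mu_{k-1}$; since the labeling must stay natural, no old node can sit above $\ell$, so $\ell$ is forced to be maximal in the enlarged poset. The only freedom is the choice of the set $D \subseteq \mu_{k-1}$ of nodes placed below $\ell$, the new order being the transitive closure of $\mu_{k-1}$ together with the relations $x < \ell$ for $x \in D$. Two halves must then be checked: (i) if the result is a partial order, $D$ must be downward closed, since $x < \ell$ and $y \le x$ force $y < \ell$ by transitivity; and (ii) conversely every down-set $D$ yields a legitimate NL poset on $k$ nodes, and distinct down-sets $D_1 \neq D_2$ yield distinct posets because the sets $\{x : x < \ell\}$ differ. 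Hence the number of ways to append a new largest node is $|J(\mu_{k-1})|$, the number of order ideals of $\mu_{k-1}$. The case of a new smallest node is dual: it is forced to be minimal, the freedom is the choice of an up-set (order filter), and complementation $D \mapsto \mu_{k-1} \setminus D$ shows the number of filters equals the number of ideals. So in either case the count is $|J(\mu_{k-1})|$.

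Next I would recall and verify the standard correspondence between order ideals and antichains of a finite poset $P$: send an ideal $D$ to its set $\max(D)$ of maximal elements, which is an antichain, and send an antichain $A$ to its down-closure $\downarrow A = \{x \in P : x \le a \text{ for some } a \in A\}$, which is an ideal. These maps are mutually inverse, since $\max(\downarrow A) = A$ when $A$ is an antichain and $\downarrow \max(D) = D$ because every element of the finite ideal $D$ lies below some maximal element of $D$. (The empty antichain corresponds to the empty ideal, i.e.\ to appending $\ell$ incomparable to everything, and the whole poset corresponds to the antichain of maximal elements of $\mu_{k-1}$.) Combining the two paragraphs, the number of ways to append a new extremal element equals $|J(\mu_{k-1})| = |anti(\mu_{k-1})| = j$.

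I do not anticipate a serious obstacle: the content lies entirely in correctly interpreting ``append a new extremal element'' and in the two routine bijective verifications. The one point that warrants care is step (i) above — confirming that adjoining $\ell$ over a down-set and taking the transitive closure introduces no spurious comparabilities among the original $k-1$ nodes, so that $\mu_{k-1}$ sits inside the new poset as an induced subposet and no information is lost, and that naturality of the labeling survives the (order-preserving) renumbering. Both are immediate once stated, so the proof should be short.
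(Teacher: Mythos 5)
Your proof is correct and follows essentially the same route as the paper's: the paper parametrizes the appended element's lower covers directly by antichains (declaring $\ell$ greater than each element of an antichain $A$ and hence than everything below $A$, then arguing any non-antichain $B$ reduces to its maximal elements), which is exactly your composite bijection ``ways to append $\leftrightarrow$ order ideals $\leftrightarrow$ antichains'' with the intermediate step through ideals made explicit. Your version is slightly more careful on the points the paper glosses over (why $\ell$ is forced to be extremal, injectivity, and the dual filter argument for the in-degree), but the underlying argument is the same.
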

\begin{proof}
%Let $\mu_m = {a_1, ... ,a_m}$ be an $m$-element subposet of $\lambda_n$, $m \leq n$.
%If $\mu_m$ is an antichain, we can trivially construct a new poset $\lambda_{n+1}$ by adding a node $(n+1)$ to $\lambda_n$ and setting $(n+1) > a_i$ for all $i \in [m]$.  This new $\lambda_{n+1}$ is unique to 
Let $A$ be an antichain of size $m\le k-1$ in the NL poset $\mu_{k-1}$ of size $k-1$, with $\mu_{k-1}$ corresponding to vertex $v$.  We can then declare $\ell$ to be greater than each $a\in A$ (and thus to every element $b<a$).  Thus $o(v)\ge\vert{\cal A}\vert$, where ${\cal A}$ is the set of antichains in the poset.  On the other hand if $B$ is not an antichain and we set $\ell> a$ for each $a\in B$, then the same could have been accomplished by starting with the maximal antichain $A\subseteq B$ consisting of elements with the highest labels.  Thus $o(v)=\vert{\cal A}\vert$.  A similar argument shows that $i(v)=\vert{\cal A}\vert$, and thus the digraph is balanced. \end{proof}
\begin{figure}
\centering
\begin{tikzpicture}[->,>=stealth',shorten >=1pt,auto,node distance=8cm,
                    thick,main node/.style={circle,draw,font=\sffamily\Large\bfseries}]
\node[main node] (1) {
\begin{tikzpicture}
\draw[-] (0,1) --(0,0);
\draw[fill] (0,0) circle [radius=0.1];
\node [left] at (0,0) {1};
\draw[fill] (0,1) circle [radius=0.1];
\node [left] at (0,1) {2};
\end{tikzpicture}};

\node[main node] [right of=1] (2) {
\begin{tikzpicture}
\draw[fill] (0,0) circle [radius=0.1];
\node[below] at (0,0) {1};
\draw[fill] (1,0) circle [radius=0.1];
\node[below] at (1,0) {2};
\end{tikzpicture}};

\path 
(1) edge [loop left] node [label=right:A]{
\begin{tikzpicture}
\draw[-] (-1.8,1.25)--(-1.8,-0.25);
\draw[fill] (-1.8,1.25) circle [radius=0.05];
\node [left] at (-1.8,1.25) {3};
\draw[fill] (-1.8,0.5) circle [radius=0.05];
\node [left] at (-1.8,0.5) {2};
\draw[fill] (-1.8,-0.25) circle [radius=0.05];
\node [left] at (-1.8,-0.25) {1};
\end{tikzpicture}} (1)

(2) edge [loop above] node [label=below:F]{
\begin{tikzpicture}
\draw[fill] (-1,0) circle [radius=0.05];
\node [below] at (-1,0) {1};
\draw[fill] (0,0) circle [radius=0.05];
\node [below] at (0,0) {2};
\draw[fill] (1,0) circle [radius=0.05];
\node [below] at (1,0) {3};
\end{tikzpicture}} (2)

(2) edge [loop below] node [label=G]{
\begin{tikzpicture}
\draw[-] (-1,1) --(-1,0);
\draw[fill] (-1,0) circle [radius=0.05];
\node [left] at (-1,0) {1};
\draw[fill] (0,0) circle [radius=0.05];
\node [left] at (0,0) {2};
\draw[fill] (-1,1) circle [radius=0.05];
\node [left] at (-1,1) {3};
\end{tikzpicture}} (2)

(1) edge [bend left=5] node [label=C]{
\begin{tikzpicture}
\draw[-] (-1,1) --(-1,0);
\draw[fill] (-1,0) circle [radius=0.05];
\node [left] at (-1,0) {1};
\draw[fill] (0,0) circle [radius=0.05];
\node [left] at (0,0) {3};
\draw[fill] (-1,1) circle [radius=0.05];
\node [left] at (-1,1) {2};
\end{tikzpicture}} (2)

(2) edge [bend left=5] node [label=above:~~~~~~~D]{
\begin{tikzpicture}
\draw[-] (-0.5,1) --(-1,0);
\draw[-] (-0.5,1) --(0,0);
\draw[fill] (-0.5,1) circle [radius=0.05];
\node [left] at (-0.5,1) {3};
\draw[fill] (0,0) circle [radius=0.05];
\node [left] at (0,0) {2};
\draw[fill] (-1,0) circle [radius=0.05];
\node [left] at (-1,0) {1};
\end{tikzpicture}} (1)

(2) edge [bend left=40] node [label=~~~E]{
\begin{tikzpicture}
\draw[-] (-1,1) --(-1,0);
\draw[fill] (-1,0) circle [radius=0.05];
\node [left] at (-1,0) {2};
\draw[fill] (0,0) circle [radius=0.05];
\node [left] at (0,0) {1};
\draw[fill] (-1,1) circle [radius=0.05];
\node [left] at (-1,1) {3};
\end{tikzpicture}} (1)

(1) edge [bend left=40] node [label=right:~~~~~~~~~B]{
\begin{tikzpicture}
\draw[-] (-1,1) --(-0.5,0);
\draw[-] (0,1) --(-0.5,0);
\draw[fill] (-0.5,0) circle [radius=0.05];
\node [left] at (-0.5,0) {1};
\draw[fill] (0,1) circle [radius=0.05];
\node [left] at (0,1) {2};
\draw[fill] (-1,1) circle [radius=0.05];
\node [left] at (-1,1) {3};
\end{tikzpicture}} (2)
;

\end{tikzpicture}
\caption{The arc-digraph of NL posets of size 3.  Any Eulerian circuit of this graph produces a universal cycle.}
\end{figure}
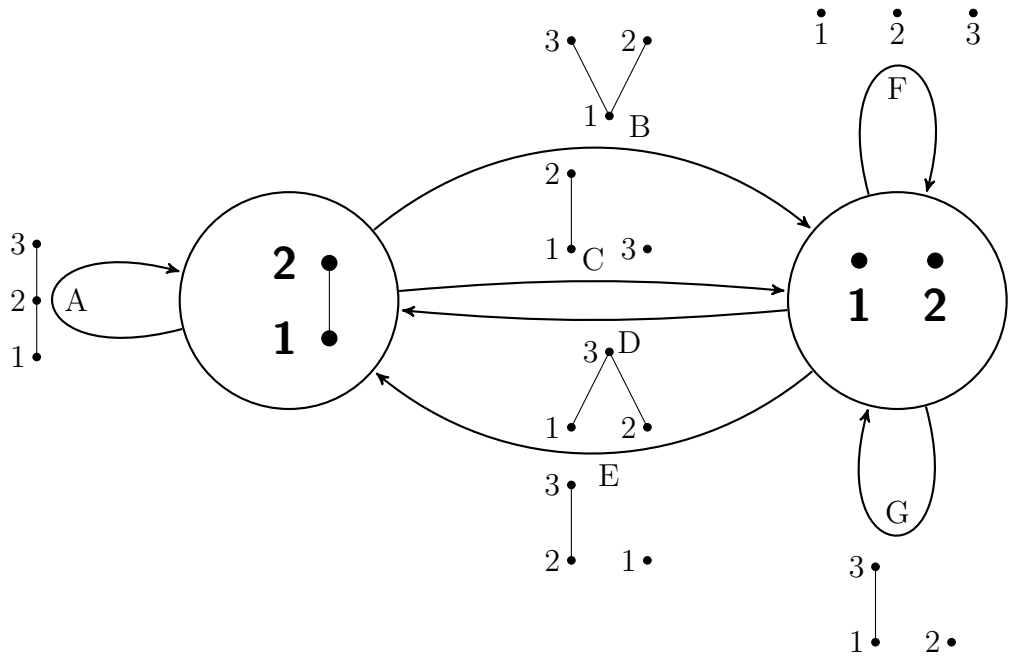

\subsection{Strong Connectedness}~
Showing strong connectedness between vertices in our graph is simple.  To move from a given vertex to a target vertex, we can take a direct path by building the NL poset of the target vertex one node at time, beginning with the smallest, which of course starts off as playing the role of the ``new largest element".  

An example of the above process in action can be seen in Figure 2, where the digraph vertices are the two NL posets of size 2, and edges are the seven NL posets of size 3.  An interesting fact may be noted:  Not only does the digraph have loops, as is quite common in these situations, but it also has multiple edges between vertices.  For example the two loops at the empty 2-poset are the result of taking the associated antichains to be $\emptyset$ or $\{1\}$.  If the seven edges in Figure 2, reading from left to right and top to bottom, are labeled $A, B, C, D, E, F,$ and $G$, then an Eulerian cycle is given, e.g., by $ABFGECD$, and this yields the ``Hasse diagram u-cycle" pictured in Figure 3.  

\begin{figure}
\centering
\begin{tikzpicture}
\tikzstyle{every node}=[draw,shape=circle,fill=black,inner sep=2pt]
\node (9) at (0,0) [label={$9=2$}] {};
\node[below right of=9] (8) [label=right:{$8=1$}] {};
\node[below left of=9] (7) [label={$7$}]{};
\draw (9) -- (8);
\draw (9) -- (7);
\node[below of=7] (6) [label=right:6]{};
\draw (7) --(6);
\node[below of=6] (4) [label=right:4]{};
\node[right of=4] (5) [label=right:5]{};
\node[left of=4] (3) [label=3]{};
\draw (6) -- (4);
\node[below of=4] (2) [label=right:2]{};
\draw (4) -- (2);
\draw (3) -- (2);
\node[below of=2] (1) [label=right:1]{};
\draw (2) -- (1);
\end{tikzpicture}
\caption{A ``Hasse diagram" u-cycle of NL posets on $\{1,2,3\}$.}
\end{figure}
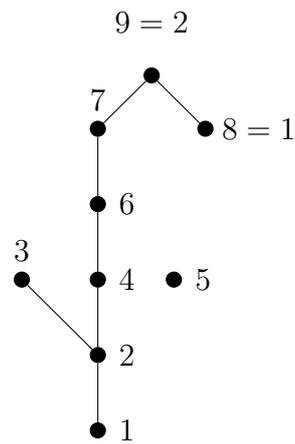

\subsection{Encoding}~

It is convenient to represent universal cycles as strings of integers, rather than as bulky diagrams as we do above.  To do so requires some careful construction, and we adapt the process in \cite{arxiv}, thinking of the Hasse diagrams of our NL posets as labeled graphs.  Since the graph u-cycles using integer codings \cite{arxiv} were not featured in the final version of the paper \cite{graphsBrockman}, we give full details here.  It is important to note that in everything related to the encoding we read strings as smallest-on-the-left and largest-on-the-right.  To begin with, we look at encoding a single NL poset of size $k$ with an integer string $(a_2,a_3,\ldots, a_k)$ of length $k-1$.  The first integer in our string corresponds to the second-smallest node in our poset, the second integer to the third-smallest node, and so on until we reach the last integer, which corresponds to the largest node -- the smallest node has no representation.

Each integer's binary representation shows the connections its node has to nodes below.  The binary representations are read from left to right, with the smallest node in the poset as the leftmost bit, and the node one step smaller than the current node as the rightmost bit.  
%If our string ends with a $4=100$, the largest node connects to only the node `$n-3$' in the Hasse diagram, while if our string ends with a $5=101$ the largest node connectes to the `$n-1$' and `$n-3$' nodes.  
We proceed with an example: the string 10455.  This string is of length 5, so it encodes an NL poset of size 6.  Consider the first integer in the string, $a_2$, which corresponds to the second-smallest node (the 2 node).  In this string $a_2 = 1$, which we convert to binary (it stays as 1), and then read as connections to nodes below the current node, with largest on the right.  This tells us the `2' node is connected to the `1' node.  Next we have $a_3 = 0=00$; this tells us the `3' node is not connected to any nodes below it.  $a_4 = 4 = 100$ (binary) tells us that the 4th node is connected to the first node.  $a_5 = 5 = 0101$ (binary), tells us that the 5th node is connected to the `4' node and the `2' node.  $a_6 = 5 = 00101$ tells us the 6th node is connected to the `5' node and the `3' node.  The NL poset this sequence encodes is given in Figure 4.

\begin{figure}
\centering
\begin{tikzpicture}
\tikzstyle{every node}=[draw,shape=circle,fill=black,inner sep=2pt]
\node (6) at (0,0) [label={6}] {};
\node[below right of=6] (5) [label={5}] {};
\node[below left of=5] (4) [label={4}] {};
\node[below left of=6] (3) [label={3}] {};
\node[below right of=5] (2) [label={2}] {};
\node[below right of=4] (1) [label={1}] {};
\draw (6) -- (5)
(6) -- (3)
(5) -- (4)
(5) -- (2)
(4) -- (1)
(2) -- (1);
\end{tikzpicture}
\caption{The NL poset encoded by the sequence 10455.}
\end{figure}
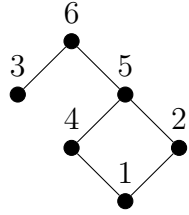
%\begin{figure}
%\centering
%\begin{tikzpicture}[->,>=stealth',shorten >=1pt,auto,node distance=8cm,
%                    thick,main node/.style={circle,draw,font=\sffamily\Large\bfseries}]
%\node[main node] (1) {
%1};
%
%\node[main node] [right of=1] (2) {
%0};
%
%\path 
%(1) edge [loop above] node {
%$11=a$} (1)
%
%(2) edge [loop above] node {
%$00=f$} (2)
%
%(2) edge [loop below] node {
%$20=g$} (2)
%
%(1) edge [bend right=10] node{
%$30=c$} (2)
%%
%(2) edge [bend left=20] node{
%$01=d$} (1)
%
%(2) edge [bend left=35] node{
%$21=e$} (1)
%
%(1) edge [bend left=5] node {
%$10=b$} (2)
%;
%
%\end{tikzpicture}
%\caption{The arc digraph that yields the encoding u-cycle for NL posets on [3].}
%\end{figure}

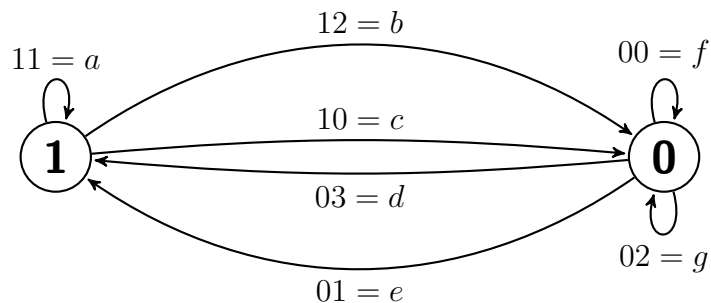
\begin{figure}
\centering
\begin{tikzpicture}[->,>=stealth',shorten >=1pt,auto,node distance=8cm,
                    thick,main node/.style={circle,draw,font=\sffamily\Large\bfseries}]
\node[main node] (1) {
1};

\node[main node] [right of=1] (2) {
0};

\path 
(1) edge [loop above] node {
$11=a$} (1)

(2) edge [loop above] node {
$00=f$} (2)

(2) edge [loop below] node {
$02=g$} (2)

(1) edge [bend left=5] node{
$10=c$} (2)

(2) edge [bend left=5] node{
$03=d$} (1)

(2) edge [bend left=35] node{
$01=e$} (1)

(1) edge [bend left=35] node {
$12=b$} (2)
;

\end{tikzpicture}
\caption{The arc digraph that yields the encoding u-cycle for NL posets on [3].}
\end{figure}

Since we can exactly encode NL posets in this way, we can directly encode our u-cycles of NL posets.  When moving through our cycle of Hasse diagrams we had ignored connections outside the $k$-window, and in our cycling of encodings we ignore binary bits which point to elements outside the $k$-window.  When we reach a $k$-window in our cycle which has entries larger than $2^j-1$, where $j$ is the position in the window (beginning from 1 on the left), we simply take those entries $a_{j+1} \mod{2^j}; j\ge 1$.  This effectively `cuts off' the connections pointing outside the $k$-window, and gives us a normal encoding string.  For instance, the encoding of the Hasse diagram u-cycle in Figures 2 and 3 will be as given in Figure 5.  

First of all, note that the corresponding upper and lower case edge labels in Figures 2 and 5 respectively are identical.  Consider for example the edge $A$, which appears as edge $a=11$ in Figure 5. Reading 11 with the smallest-on-the-left convention, we read a 1 for both the `2' and `3' nodes.  The 1 for the `2' node (same in binary), reading with largest-on-the-right means the `2' node is connected to the `1' node in our poset.  Similarly, the 1 for the `3' node tells us the `3' node is connected to the node immediately below it, i.e., the `2' node. This is the same as the poset represented by edge $A$.  Similarly the edge $d=03$ represents node 3 being connected to both nodes $1$ and $2$, and node 2 being unconnected, exactly as in edge $D$.  

The u-cycle in Figure 3 was $ABFGECD$; in Figure 5 it would be $abfgecd$, better seen as $dabfgec$ or 0{\bf 3}1{\bf 2}0{\bf 2}1, where boldface symbols represent the fact that these symbols are read mod 2 at the second instance.  A somewhat more formal exposition, using equivalence classes of window labels, may be found in \cite{arxiv}.

\subsection{Counting Universal Cycles of NL posets}
We know u-cycles of NL posets exist, but we can't say how large the u-cycle is, because there is no method to count NL posets.  It would be of interest, then, to examine the graph produced in our u-cycle method, and to attempt to count the number of u-cycles of NL posets of size $k$.  To do so, we'll use the BEST theorem of  de Bruijn, van Aardenne-Ehrenfest, Smith and Tutte \cite{best1}, \cite{best2} -- which states that the number of Eulerian circuits in an Eulerian digraph $G$ equals
\begin{equation}ec(G)=t_w(G)\prod_{v\in V}(deg(v)-1)!,\end{equation}
where $t_w(G)$ is the number of trees directed at any one $w\in G$ (arborescences).  Now it is well known (from the digraph version of the matrix tree theorem) that $t_w(G)$ is given by any cofactor of the Laplacian of the arc digraph $G$.  Define $anti(\lambda)$ to be the number of antichains in poset $\lambda$, and $NL(k)$ to be the number of NL posets of size $k$.  Throughout, $\mu$ will refer to an NL poset of size $k-1$, and $\lambda$ to an NL  poset of size $k-2$.

Let us consider the degree matrix and adjacency matrix for $G$.  Since the  graph has vertices corresponding to NL posets of size $k-1$, both these matrices will be square matrices of size $NL(k-1) \times NL(k-1)$.  The degree matrix is fairly straightforward -- it will contain, along the diagonal, $anti(\mu_i)$ with $i \in [NL(k-1)]$.  This is due to the degree of each vertex being determined by the number of antichains in the corresponding NL poset.  The ordering of these $anti(\mu_i)$ values is arbitrary but can be made somewhat more ``block-diagonal"-like based on considerations in the adjacency matrix.
%-- we will later define our ordering based on considerations in the adjacency matrix.

The adjacency matrix is a little more interesting.  The adjacency of any $\mu_i$ is determined by the $k-2$ largest elements in $\mu_i$, because the same arrangement of elements must appear as the $k-2$ smallest elements of an adjacent $\mu_j$.  We can consider these adjacency-determining elements as their own NL poset of size $k-2$, denoted by $\lambda_i$.  This $\lambda_i$ appears as the smallest $k-2$ elements in adjacent posets, so we'll give it the corresponding name $\lambda'_j$, and say that poset $\mu_i$ is adjacent to poset $\mu_j$ if $\lambda_i = \lambda'_j$.  So in our adjacency matrix, the column corresponding to $\mu_i$ will have a nonzero entry for every $\mu_j$ such that $\lambda_i = \lambda'_j$.  Since there are $anti(\lambda'_j)$ such NL posets, we will have $anti(\lambda_i)$ nonzero entries in the $\mu_i$ column.  Furthermore, since there are $anti(\lambda_i)$ posets similar to $\mu_i$, that is, posets which contain $\lambda_i$ as their largest $n-2$ elements, we will have $anti(\lambda_i)$ columns which each contain $anti(\lambda_i)$ nonzero entries.

This gives us the interesting observation (equivalent to the fact that in a digraph, the sum of the degrees equals the size) that:
\begin{equation}\sum\limits_{i=1}^{NL(k-2)} anti(\lambda_i)  = NL(k-1).
\end{equation}
That is, the number of NL posets of size $k-1$ is equal to the sum of the number of antichains of every NL poset of size $k-2$.   We show next that these considerations can lead to some progress in the determination of the number of Eulerian circuits in $G$.  We first consider small cases.  For $k=3$, it is east to see from Figure 2 that the degree and adjacency matrices $D$ and $A$ are given by 
\[ D=\left( \begin{array}{cc}
3 & 0\\ 
0 & 4\end{array} \right); A=\left( \begin{array}{cc}
1 & 2\\ 
2 & 2\end{array} \right),\]
so that the Laplacian is
\[ L=\left( \begin{array}{cc}
2 & -2\\ 
-2 & 2\end{array} \right),\]
and thus $t_w(G)=2$ and, by (1), $ec(G)=2\cdot2!\cdot3!=24$, a fact that is readily verified by simple counting arguments on the arc digraph in Figure 2.  For $k=4$ we label the vertices of $G$ using the edge labels $A$ through $G$ as in Figure 2 to get the degree vector ${\bf d}=(4\ 5\ 6\ 5\ 6\ 8\ 6)$, adjacency matrix
\[ A=\left( \begin{array}{ccccccc}
1 & 1 & 2 & 0 & 0 & 0 & 0 \\
0 & 0 & 0 & 1 & 1 & 2 & 1 \\
0 & 0 & 0 & 1 & 2 & 2 & 1 \\
1 & 2 & 2 & 0 & 0 & 0 & 0 \\
2 & 2 & 2 & 0 & 0 & 0 & 0 \\
0 & 0 & 0 & 2 & 2 & 2 & 2 \\
0 & 0 & 0 & 1 & 1 & 2 & 2
\end{array} \right),\]
and Laplacian
\[ L=\left( \begin{array}{ccccccc}
3 & -1 & -2 & 0 & 0 & 0 & 0 \\
0 & 5 & 0 & -1 & -1 & -2 & -1 \\
0 & 0 & 6 & -1 & -2 & -2 & -1 \\
-1 & -2 & -2 & 5 & 0 & 0 & 0 \\
-2 & -2 & -2 & 0 & 6 & 0 & 0 \\
0 & 0 & 0 & -2 & -2 & 6 & -2 \\
0 & 0 & 0 & -1 & -1 & -2 & 4
\end{array} \right),\]
whose leading cofactor is 4900 -- which leads to $4900\cdot6\cdot24\cdot120\cdot24\cdot120\cdot5040\cdot120=$ 147,483,721,728,000,000 Eulerian paths.  The sequence 
$ (24, 147483721728000000,\ldots)$ is not to be found in OEIS.

We next turn to asymptotic considerations, using (2) to gain some handle on the product term in (1) (of course, estimating the number of arborescences is a separate matter altogether).  We have by (2) and Stirling's approximation
\begin{eqnarray*}
\prod_{v\in V}(deg(v)-1)!&=&\exp\lc\sum_v\ln(deg(v)-1)!\rc\\
&=&\exp\lc\sum_v\ln(deg(v))!-\ln(deg(v))\rc\\
&\sim&\exp\lc\sum_v\lr\ln\lc{\sqrt{2\pi deg(v)}}\lr\frac{deg(v)}{e}\rr^{deg(v)}\rc-\ln(deg(v))\rr\rc\\
&=&\exp\lc\sum_vdeg(v)\ln(deg(v))-deg(v)-\frac{1}{2}\ln(deg(v))+\ln{\sqrt{2\pi}}\rc\\
&=&\exp\lc\sum_vdeg(v)\ln(deg(v))\{1+o(1)\}\rc.
\end{eqnarray*}
Now since $\sum_vdeg(v)=NL(k)$, we have that $\sum_vdeg(v)\ln(deg(v))\sim NL(k)\ln NL(k)$, and so
\[
\prod_v(deg(v)-1)!\sim NL(k)^{NL(k)}.
\]

\section{Overlap Cycles of Words of Weight $k$}~

As indicated in the Introduction, the work of \cite{hh2} reveals that we may use the characteristic vector coding to form $s$-ocycles of the $k$-subsets of $[n]$ provided that $gcd(s,n)=1$.  We want to extend this result, and show that there exists an $s$-overlap cycle for words of weight $k$ and length $n$ on the $q+1$-letter alphabet $\{0,1,\ldots,q\}$, provided that $s\in[n-2]$,  $gcd(n, s) = 1$, and $q \leq k$. The obvious correspondence between words and multisets thus yields $s$-ocycles for $k$-multisets of an $n$ element set in which no element may appear more than $q$ times.  The results in \cite{hh2} on the ocyclability of all permutations of a fixed multiset show that words of weight $k$ with a fixed composition {\it can} be ocycled; we seek to do this for words of weight $k$ with {\it any} composition.  The coding that we use for multisets of $[n]$ is thus of length $n$, with the $i$th element in a string indicating how many times element $i$ appears in the multiset.

\subsection{Balancedness}~

Consider a graph with vertices representing overlaps (strings of length $s$ and weight $\le k$ that can be extended to an edge of weight $k$) and edges representing weight $k$ words of length $n$.  Observe that for any vertex $ v = v_1 v_2 ... v_s$ and for any incoming edge $w = w_1 w_2 ... w_{n-s} v_1 v_2 ... v_s$ we have an analogous outgoing edge $w' = v_1 v_2 ... v_s w_1 w_2 ... w_{n-s}$. Thus, in-degree is equal to out-degree for every vertex $v$. 

\subsection{Weak Connectedness}~

Now, given some vertex $v = v_1 v_2 ... v_s$  with $ \sum_{i=1}^{s} v_i = k_v\le k$ where $k_v$ is the weight of $v$, we show that we can reach the ``most-maximal" vertex $m = m_1 m_2 ... m_s$ with 
\begin{align*} m_{s- \lfloor k/q \rfloor+1} &= ... = m_s = q,  \end{align*}
 \begin{align*}m_i = 0 \text{ for $1\le i\le s- \lfloor k/q \rfloor $.} \end{align*}

 The most maximal vertex has as many characters as possible with weight equal to that of ``the largest letter of the alphabet" with the remaining terms equaling zero.
 
 We first append letters to the end of $v$ to create a word $w = w_1 w_2 ... w_n$, of required weight $k$, with $w_1 w_2 ... w_s = v_1 v_2 .. v_s$.  For $\lfloor \frac{k-k_v}{q} \rfloor$ letters $w_j$ such that $s<j\leq n$, we set $w_j = q$ and if $\frac{k-k_v}{q} \notin \mathbb{Z}$ then for one such $w_j$, we set $w_j = (k-k_v) \mod{q}$. For all other $w_j$, let $w_j=0$. We then rearrange the letters of $w$, placing $w_i$ such that $0 \le w_i < q$ in descending order from the beginning with $w_i =q$ at the very end, to make a new word $w' = w'_1 w'_2 ... w'_n$. The fact that $gcd(n,s)=1$ permits us to do this; we just use Lemma 4.2 in \cite{hh2} to follow the Eulerian cycle that exists for ``permutations of fixed multisets" until we reach the multiset with the special ordering we seek.

This word $w'$ points towards  a vertex $v' = v'_1 v'_2 ... v'_s$ with $w_{n-s+1}' w_{n-s+2}' ... w_n' = v'_1 v'_2 ... v'_s$.  We construct a new word $w''$ using the append and rearrange method described above.  We continue this process, alternating between words and vertices until we reach the most-maximal vertex. The above algorithm is guaranteed to terminate since the rearrangement leads to a decrease in the weight of vertices (except for the $q$s at the end) as well as an increase in the number of zeros. 

For example, let $n=9, k=15, s=7, q=9$. We start at $v = 1332051$.  $k_v = 15$, so we append 0s to make $w = 133205100$.  Then we rearrange as described above:  $w' = 533211000$.  $w'$ points towards $v' = 3211000$, and we repeat the process with this new vertex.  $v' = 3211000 \rightarrow w''' = 321100080 \rightarrow w^{(4)} = 832110000 \rightarrow v'' = 2110000 \rightarrow w^{(5)} = 211000092 \rightarrow w^{(6)} = 221100009 \rightarrow v''' = 1100009 \rightarrow w^{(7)} = 110000940 \rightarrow w^{(8)} = 411000009 \rightarrow v^{(4)} = 1000009 \rightarrow w^{(9)} = 100000950 \rightarrow w^{(10)} = 510000009 \rightarrow v^{(5)} = m = 0000009$.

~

Since in-degree is equal to out-degree, and the graph is weakly connected, the graph is eulerian and we have an $s$-ocycle.

\section{Overlap Cycles of Juggling Sequences}~

Chung and Graham (2007) produced u-cycles of site-swap juggling patterns \cite{jugglingChung}, but the u-cycles didn't work perfectly -- they were split up into unions of disjoint cycles. However, using $s$-ocycles, we demonstrate that one can cycle through all juggling patterns of length $n$ and number of balls $\leq b$. This result has been proved independently and in greater generality by Horan \cite{horan}, but our proof in the special case appears to be somewhat simpler.  We will assume in this section that $\gcd(n,n-2)=1$, i.e., we take $s=n-2$.

\subsection{Cyclic Shifts}~

Given an arbitrary juggling pattern edge $j=j_1 j_2 ... j_n$ with underlying permutation $p=p_1 p_2 ... p_n $ such that  $p_1 \pmod{n} \not \equiv p_2 \pmod{n} \not \equiv ... \not \equiv  p_n \pmod{n}$, and pointing towards the vertex $j=j_{n-s+1}\ldots j_n$ we construct an edge leaving from that vertex and show that it is a legal juggling sequence.  Specifically, we drop the first $n-s$ letters of the edge, and place them in order at the end of the outgoing edge.  
Thus, we move from $j$ to the vertex $v = j_{n-s+1} j_{n-s+2} ... j_n$ and then to the outoing edge $j' = j_{n-s+1} j_{n-s+2} ... j_n j_1 j_2 ... j_{n-s}$.  We now show that the outgoing edge is a legal juggling sequence, by showing the underlying pattern consists of distinct elements modulo $n$.  The characters $j_{n-s+1} j_{n-s+2} ... j_n$ move backwards in the string $s$ spaces, so each corresponding $p_{n-s+1} p_{n-s+2} ... p_n$ has $s$ subtracted from it modulo $n$.  The characters $j_1 j_2 ... j_{n-s}$ move forward in the string $n-s$ spaces so each corresponding $p_1 p_2 ... p_{n-s}$ has $n-s$ added to it modulo $n$.  Since both of these operations are modulo $n$ and $n-s \equiv -s \pmod{n}$, we modify each term of the underlying permutation the same way, and end up with a new legal juggling sequence.    Since we can apply this operation to any edge in the graph, we can repeat it to obtain any cyclic shift of a given juggling pattern.

\subsection{In Degree and Out Degree}

We establish a bijection between incoming and outgoing edges for a vertex $v$ by means of cyclic shifts. If we have an incoming edge $j$, then we can shift by overlap size to obtain an outgoing edge $j'$ from $v$. Similarly, if we have an outgoing edge $r$, we can shift by overlap size to obtain an incoming edge $r'$ from $v$.  There is a bijection between $j$ and $j'$ (or $r$ and $r'$) and we are done.

\subsection{Weak Connectedness}~

We now show weak connectedness for $s=n-2$. Given any juggling sequence of length $n$ and number of balls $\leq b$, we'll show there exists a path to the vertex consisting of all 0s.  

Given some juggling sequence $j = j_1 j_2 ... j_n$ we begin by cycling so that the character (or one of the characters) with the largest weight is in the $j_2$ position.  This will be the default step.  At any point in the process, if a character is of weight $n$, we reduce it to $0$. Reinitialize by renaming the new edge $j= j_1 j_2 ... j_n$.  If $j_1 \neq 0$, we let $m_2 = j_2+1$ and $m_1 = j_1 - 1$, and transition to the edge $j_3\ldots j_{n}m_2m_1$.  If $j_1 = 0$, we cycle until the character with the next largest weight is in the $j_1$ position.  If $j_1 \neq 0$, we let $m_2 = j_2 + 1$ and $m_1 = j_1 - 1$, effectively breaking down this next largest element.   Again, after reinitialization, we cycle so that the largest element is in the $j_2$ position and proceed based on the value of $j_1$.  
The process is continued until we have reached the vertex that consists of all $0$'s.

For example, let $n=9$ and $s=7$. We want to show there exists a path to the sink vertex of all $0$s. We start at $300300300 \rightarrow 030030030 \rightarrow 300300300 \rightarrow 030030012 \rightarrow 203003001 \rightarrow 300300111 \rightarrow 130030011 \rightarrow 003001140 \rightarrow 140003001 \rightarrow 000300150 \rightarrow 150000300 \rightarrow 000030060 \rightarrow 060000030 \rightarrow 300600000 \rightarrow 060000012 \rightarrow 206000001 \rightarrow 600000111 \rightarrow 160000011 \rightarrow 000001170 \rightarrow 170000001 \rightarrow 000000180 \rightarrow 180000000 \rightarrow 000000000$.

It is conceivable that a clever adaptation of this process will work for $s\ne n-2$, by introducing new symbols $m_1,\ldots,m_{n-s}$, but we do not explore this idea, particularly in light of Horan's \cite{horan} general result.

     \end{document}